%%%%%%%%%%%%%%%%%%%%%%%%%%%%%% -*- Mode: Latex -*- %%%%%%%%%%%%%%%%%%%%%%%%%%%%
%% substoch_arxiv.tex --- arxiv
%% Author          : A. Puhalskii
%% Created On      : Wed Aug 29 16:46:40 2018
%% Last Modified By: A. Puhalskii
%% Last Modified On: Wed Aug 29 16:47:19 2018
%% Update Count    : 2
%% Status          : Unknown, Use with caution!
%%%%%%%%%%%%%%%%%%%%%%%%%%%%%%%%%%%%%%%%%%%%%%%%%%%%%%%%%%%%%%%%%%%%%%%%%%%%%%%

%%%%%%%%%%%%%%%%%%%%%%%%%%%%%% -*- Mode: Latex -*- %%%%%%%%%%%%%%%%%%%%%%%%%%%%
%% substoch_1.tex --- sdelal sustoch
%% Author          : A. Puhalskii
%% Created On      : Wed Aug 29 16:19:03 2018
%% Last Modified By: A. Puhalskii
%% Last Modified On: Wed Aug 29 16:43:50 2018
%% Update Count    : 10
%% Status          : Unknown, Use with caution!
%%%%%%%%%%%%%%%%%%%%%%%%%%%%%%%%%%%%%%%%%%%%%%%%%%%%%%%%%%%%%%%%%%%%%%%%%%%%%%%

%%%%%%%%%%%%%%%%%%%%%%%%%%%%%% -*- Mode: Latex -*- %%%%%%%%%%%%%%%%%%%%%%%%%%%%
%% substoch.tex --- substoch properties
%% Author          : A. Puhalskii
%% Created On      : Thu Aug 23 16:47:49 2018
%% Last Modified By: A. Puhalskii
%% Last Modified On: Wed Aug 29 16:18:54 2018
%% Update Count    : 51
%% Status          : Unknown, Use with caution!
%%%%%%%%%%%%%%%%%%%%%%%%%%%%%%%%%%%%%%%%%%%%%%%%%%%%%%%%%%%%%%%%%%%%%%%%%%%%%%%

%\documentclass[pdftex,11pt]{article}
\documentclass[11pt]{article}
\usepackage[centertags,intlimits]{amsmath}

\usepackage{amsfonts}                     
\usepackage{amsthm}
\usepackage{graphics}

\allowdisplaybreaks[2]
\setlength{\textwidth}{5.5in}  
\setlength{\textheight}{7.4in}  
\setlength{\oddsidemargin}{0.5in}  
\setlength{\evensidemargin}{0.5in}

\setlength{\headheight}{0in}

%\numberwithin{equation}{section}
\newtheorem{theorem}{Theorem}
\newtheorem{lemma}{Lemma}

\theoremstyle{remark}

\newcommand{\nc}{\newcommand}
\nc{\vb}{\mathbf{v}}
\nc{\bx}{\mathbf{x}}
\nc{\by}{\mathbf{y}}
\nc{\bz}{\mathbf{z}}
\nc{\bu}{\mathbf{u}}
\nc{\bv}{\mathbf{v}}
\nc{\ba}{\mathbf{a}}
\nc{\bs}{\mathbf{s}}
\nc{\bq}{\mathbf{q}}
\nc{\bd}{\mathbf{d}}
\nc{\bb}{\mathbf{b}}
\nc{\bc}{\mathbf{c}}
\nc{\bi}{\mathbf{i}}
\nc{\bfr}{\mathbf{r}}
\nc{\bA}{\mathbf{A}}
\nc{\R}{\mathbb R}
\nc{\N}{\mathbb N}
\nc{\C}{\mathbb C}
\nc{\D}{\mathbb D}
\nc{\Z}{\mathbb Z}
\nc{\F}{\mathbf F}
\nc{\bbS}{\mathbb S}
\nc{\bE}{\mathbf E}
\nc{\B}{\cal B}
\nc{\br}{\bigr}
\nc{\bl}{\bigl}
\nc{\Bl}{\Bigl}
\nc{\Br}{\Bigr}
\nc{\ind}[1]{\,\mathbf{1}_{\{#1\}}\,}
%\nc{\ind}{\mathbf{1}}
\nc{\bP}{\mathbf{P}}

%\usepackage[bookmarks=true,
%            bookmarksnumbered=true,
%            bookmarksopen=false,
%            plainpages=false,
%            pdfpagelabels,
%            colorlinks,
%            citecolor=green,        %      color of cite links
%            pagecolor=blue,         % color of page links
%            linkcolor=cyan,         % color of hyperref links
%            menucolor=blue,         % color of Acrobat Reader menu buttons
%            urlcolor=magenta,       % color of page of \url{...}
%            linkcolor=blue]{hyperref}

%\usepackage[dvips]{epsfig,graphicx}
%\usepackage{hyperref}
%\usepackage{pslatex}

%\hypersetup{
%pdftitle={Evolutionary birth-death processes and their large population limits},
%pdfauthor={Puhalskii, Reiman, Simon}
%}

\title{Some properties of substochastic matrices} 
\author{A. Puhalskii}

\begin{document}
\maketitle

\begin{abstract}
In this note we establish some properties of  matrices
that we haven't been able to find in the literature.
\end{abstract}
Let  $I$ represent the  $n\times n$--identity matrix and let 
 $P=(p_{lm})$ represent an $n\times n$ row substochastic  matrix of spectral radius less
than unity. The next two assertions seem to concern new properties
of substochastic matrices,  cf., Bellman \cite{Bel70},
 Gantmacher \cite{Gan98} and Lancaster
\cite{Lan69}.
\begin{theorem}
  The diagonal elements of $(I-P^T)^{-1}$ are maximal elements of
  their respective rows.
\end{theorem}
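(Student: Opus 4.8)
The plan is to transpose the problem first. Since $I-P^T=(I-P)^T$ and the inverse of a transpose is the transpose of the inverse, writing $M=(I-P)^{-1}$ we have $(I-P^T)^{-1}=M^T$, so the $(i,j)$ entry of $(I-P^T)^{-1}$ equals $M_{ji}$. Reading off the $i$-th row of $(I-P^T)^{-1}$ therefore recovers the $i$-th column of $M$, and the diagonal entry is $M_{ii}$. Hence the theorem is equivalent to the assertion that each diagonal entry $M_{ii}$ of $M=(I-P)^{-1}$ is the largest entry in its column, i.e. $M_{ii}\ge M_{ji}$ for every $j$.

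Next I would record the basic facts coming from $\rho(P)<1$: the Neumann series $M=\sum_{k=0}^{\infty}P^{k}$ converges, its entries are nonnegative (being sums of products of the nonnegative $p_{lm}$), and $M_{ii}\ge (P^{0})_{ii}=1$. Fixing the column index $i$ and setting $v_{j}=M_{ji}$, the identity $M=I+PM$ gives $v=e_{i}+Pv$, that is $v_{j}=\delta_{ji}+\sum_{m}p_{jm}v_{m}$ for every $j$.

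The core of the argument is a first-passage (renewal) decomposition. Interpreting $(P^{k})_{ji}$ as the total weight $\sum\prod_{t}p_{x_{t}x_{t+1}}$ of length-$k$ walks from $j$ to $i$, every walk from $j\ne i$ to $i$ splits uniquely at its first arrival at $i$ into a first-passage walk $j\to i$ (with no intermediate visit to $i$) followed by an arbitrary walk $i\to i$. Summing over lengths and using absolute convergence, this factorizes as $M_{ji}=f_{ji}\,M_{ii}$, where $f_{ji}$ is the total weight of first-passage walks from $j$ to $i$. Since $M_{ii}\ge 1\ge 0$, the desired inequality $M_{ji}\le M_{ii}$ follows as soon as one knows $f_{ji}\le 1$.

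Establishing $f_{ji}\le 1$ is the step I expect to be the main obstacle, since it is exactly where row substochasticity must be used. Let $\hat P$ denote $P$ with row and column $i$ deleted, and let $c_{l}=p_{li}$ be the deleted column restricted to the remaining rows; then $f_{ji}=\bigl((I-\hat P)^{-1}c\bigr)_{j}$. Because each row sum of $P$ is at most one, $c\le (I-\hat P)\mathbf{1}$ holds componentwise, and since $(I-\hat P)^{-1}=\sum_{k}\hat P^{k}$ has nonnegative entries, multiplying the inequality through preserves it and yields $f_{ji}\le(\mathbf{1})_{j}=1$. (Probabilistically this is transparent: $f_{ji}$ is the probability that the killed chain governed by $P$ ever reaches $i$ from $j$.) An alternative route that avoids the renewal identity is a maximum-principle argument applied to $v=e_i+Pv$: if the maximum of $v$ were attained at some $j^{*}\ne i$, the relation $v_{j^{*}}=\sum_{m}p_{j^{*}m}v_{m}$ together with $v_{m}\le v_{j^{*}}$ would force $\sum_{m}p_{j^{*}m}=1$ and propagate the maximum to every state reachable from $j^{*}$; were $i$ never reached, that reachable set would be closed under $P$ and carry a stochastic principal submatrix, giving $P$ the eigenvalue $1$ and contradicting $\rho(P)<1$.
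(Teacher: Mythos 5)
Your proof is correct, but it takes a genuinely different route from the paper's. The paper argues entirely through determinants: it writes the entries of $(I-P^T)^{-1}$ as signed minors divided by $\det(I-P^T)$, reduces the claim to the statement that $(-1)^{m+l}$ times the $(l,m)$-minor of $I-P^T$ is at most the $(m,m)$-minor, and settles the adjacent case $l=m+1$ by observing, via multilinearity, that the sum of the two relevant minors equals $\det(\tilde I-\tilde P)$ for a column-substochastic matrix $\tilde P$ obtained from $P^T$ by merging two rows, whence $\rho(\tilde P)\le 1$ forces this determinant to be nonnegative; general positions are handled by permuting rows and columns. You instead transpose to $M=(I-P)^{-1}$, expand $M$ as the Neumann series $\sum_{k\ge 0}P^k$, and obtain column-maximality of the diagonal from the first-passage factorization $M_{ji}=f_{ji}M_{ii}$ together with $f_{ji}\le 1$, which is where you correctly isolate the use of row substochasticity (componentwise $c\le(I-\hat P)\mathbf{1}$, then multiply by the entrywise-nonnegative $(I-\hat P)^{-1}$). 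Your route is the renewal/Markov-chain argument; it proves more than asked, namely the exact ratio $M_{ji}/M_{ii}=f_{ji}\in[0,1]$, interpretable as a reaching probability, which also pins down when equality holds, whereas the paper's argument stays within determinant algebra and turns on the separate fact that the determinant of the identity minus a substochastic matrix is nonnegative. Two details worth making explicit in your write-up: (i) writing $(I-\hat P)^{-1}=\sum_{k}\hat P^k$ requires $\rho(\hat P)<1$, which holds because $\hat P$ is a principal submatrix of the nonnegative matrix $P$ with $\rho(P)<1$ (entrywise, $\hat P^k$ is dominated by the corresponding submatrix of $P^k$, so the series converges); (ii) in your alternative maximum-principle argument, the case where $i$ is reachable from the maximizing state $j^*$ is not a contradiction: the propagation then makes $v_i$ equal to the maximum, which is exactly the desired conclusion, so the case split should be stated as such rather than folded into the contradiction.
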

\begin{proof}
Let $P$ be of size $n\times n$\,.
Let $C=(c_{ml})$ be defined by $C=(I-P^T)^{-1}$\,. We have that
\begin{equation*}
  c_{ml}=\frac{1}{\text{det}\,(I-P^T)}\,(-1)^{m+l}M_{lm}\,,
\end{equation*}
where $M_{lm}$ represents the $(l,m)$--minor of  $I-P^T$\,. We note that
$\text{det}\,(I-P^T)>0$\,. Indeed, $\text{det}\,I=1$ and
$\text{det}\,(I-\lambda P^T)\not=0$ for $\lambda\in[0,1]$ by $P$ being
of spectral radius less than one.
By continuity, $\text{det}\,(I-P^T)>0$\,. 
Thus, one needs to prove that
$(-1)^{m+l}M_{lm}\le M_{mm}$\,.
Suppose that $l=m+1$\,. Then one needs that
$M_{mm}+M_{m+1,m}\ge0$\,. By the determinant being multilinear,
 $M_{mm}+M_{m+1,m}$ is the determinant of
matrix  $\tilde I-\tilde P$\,, where $\tilde I$ 
is the identity $(n-1)\times (n-1)$--matrix and $\tilde P$ is the  $(n-1)\times
(n-1)$--matrix that is obtained from  $P^T$ by adding up rows $m$ and
$m+1$ and deleting the $m$th column.
As  $P^T$ is column substochastic, 
$\tilde P$ is column substochastic as well. Hence, it's of spectral
radius less than or equal to one. It follows that
 $\text{det}\,(\tilde I-\tilde P)\ge0$\,, so, 
$M_{m+1,m}+ M_{m,m}\ge0$\,. Suppose that $l>m+1$\,. By transposing
adjacent rows and columns one can move row $l$ of $\tilde I-\tilde P$
 into the position of row $m+1$ and move
column $l$ into the position of 
column $m+1$\,, respectively, without disturbing the
order in which the other rows and columns are arranged. The matrix thus obtained
is of the form $\tilde I-\tilde{\tilde P}$\,.
For this matrix, $\tilde M_{m+1,m}=(-1)^{l-m-1}M_{lm}$\,, as the minor
sign will flip only when the columns are transposed. Also
$\tilde M_{mm}=(-1)^{2(l-m-1)}M_{mm}$\,. Since $\tilde M_{mm}\ge
(-1)\tilde M_{m+1,m}$\,, we conclude that
 $M_{mm}\ge(-1)^{l-m-2} M_{lm}=(-1)^{l+m}
M_{lm}$\,.
The case where  $l<m$ is dealt with similarly.
\end{proof}
For square matrix $B$\,,
let   $B(i|j)$ denote the matrix that is obtained from
 $B$ by deleting the $i$th row and the  $j$th column.
Let $b_{\cdot
      l}$
represent the
 $l$th column of $B$ with the  $l$th entry deleted and let
    $b_{l\cdot}$ represent the  $l$th row of $B$ with $l$th entry deleted.
Let 
$e_i$ represent the $i$th element of the standard basis in
$\R^{n-1}$ and let
for $l\not=m$
\begin{equation*}
  f_{ml}=
  \begin{cases}
    e_m^T\,,&\text{ если }m<l\,,\\
e_{m-1}^T\,,&\text{ если }m>l\,,
  \end{cases}
\end{equation*}
\begin{theorem}
  The following identites hold:
\begin{equation*}
  \frac{p_{m\cdot}\bl((I-P)(m|m)\br)^{-1}p_{\cdot m}}{1-p_{mm}
-p_{m\cdot}\bl((I-P)(m|m)\br)^{-1}p_{\cdot m}}
=\sum_{k\not=m}\frac{p_{km}f_{mk}\bl((I-P)(l|l)\br)^{-1}p_{\cdot k}}{1-p_{kk}
-p_{k\cdot}\bl((I-P)(l|l)\br)^{-1}p_{\cdot k}}
\end{equation*}
and
\begin{multline*}
    \frac{(1-p_{mm})f_{lm}((I-P)(m|m))^{-1}p_{\cdot m}}{1-p_{mm}
-p_{m\cdot}((I-P)(m|m))^{-1}p_{\cdot m}}
=\frac{p_{lm}}{1-p_{ll}-p_{l\cdot}((I-P)(l|l))^{-1}p_{\cdot l}}\\
+\sum_{\substack{k\not=l,\\k\not=m}}\frac{p_{km}f_{lk}((I-P)(k|k))^{-1}p_{\cdot
    k}}
{1-p_{kk}-p_{k\cdot}((I-P)(k|k))^{-1}p_{\cdot k}}
\end{multline*}
\end{theorem}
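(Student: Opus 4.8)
The plan is to reduce both identities to reading off individual entries of the matrix equation $Q(I-P)=I$, where $Q=(I-P)^{-1}=(q_{lm})$ is well defined because $P$ has spectral radius below one. All the substance lies in a Schur-complement representation of the entries of $Q$; once that is in place, the first identity is the $(m,m)$ entry and the second is the $(l,m)$ entry ($l\ne m$) of $Q(I-P)=I$. (In the first identity I read the symbol $l$ in the summand as the summation index $k$, so that the inner matrix is $\bl((I-P)(k|k)\br)^{-1}$ and the denominator is $S_k$ below with $k$ in place of $m$; this is the only reading under which the right-hand side is a single diagonal entry.)

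First I would fix $m$ and partition $\{1,\dots,n\}$ into $\{m\}$ and its complement, writing $I-P$ in the associated $1+(n-1)$ block form with diagonal scalar $1-p_{mm}$, off-diagonal blocks $-p_{m\cdot}$ and $-p_{\cdot m}$, and complementary block $(I-P)(m|m)$. Putting
\[
S_m=1-p_{mm}-p_{m\cdot}\bl((I-P)(m|m)\br)^{-1}p_{\cdot m},
\]
the standard block-inversion formula gives
\[
q_{mm}=\frac1{S_m},\qquad q_{lm}=\frac{f_{lm}\bl((I-P)(m|m)\br)^{-1}p_{\cdot m}}{S_m}\quad(l\ne m),
\]
where $f_{lm}$ is exactly the map that extracts the coordinate indexed by $l$ from a vector indexed by $\{1,\dots,n\}\setminus\{m\}$, the passage from $e_l$ to $e_{l-1}$ accounting for the deleted $m$th slot. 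To license these formulas I would record that $(I-P)(m|m)=\tilde I-P(m|m)$ is invertible and $S_m>0$: the principal submatrix $P(m|m)$ is nonnegative with spectral radius at most that of $P$, hence below one, so $\det\bl(I-P)(m|m)\br>0$ by the continuity argument of Theorem 1, and $S_m=\det(I-P)/\det\bl(I-P)(m|m)\br>0$ by the Schur determinant identity.

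With the representation in hand the rest is transcription. For the first identity the left numerator equals $p_{m\cdot}((I-P)(m|m))^{-1}p_{\cdot m}=(1-p_{mm})-S_m$, so the left-hand side is $((1-p_{mm})-S_m)/S_m=(1-p_{mm})q_{mm}-1$, while each summand on the right is $p_{km}q_{mk}$. The claim is therefore
\[
(1-p_{mm})q_{mm}-1=\sum_{k\ne m}p_{km}q_{mk},
\]
which is the $(m,m)$ entry of $Q(I-P)=I$, i.e.\ $\sum_k q_{mk}(\delta_{km}-p_{km})=1$, after isolating $k=m$. For the second identity the left-hand side is $(1-p_{mm})q_{lm}$ and the right-hand side is $p_{lm}q_{ll}+\sum_{k\ne l,m}p_{km}q_{lk}$; this is the $(l,m)$ entry ($l\ne m$) of $Q(I-P)=I$, namely $q_{lm}=\sum_k q_{lk}p_{km}$, after isolating $k=m$ and $k=l$.

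I expect the main obstacle to be the index-shift bookkeeping carried by $f_{lm}$: one must check that the block-inversion formula, which naturally lives on the reordered index set with $m$ brought to the front, really reproduces $q_{lm}$ as the $l$th coordinate of $((I-P)(m|m))^{-1}p_{\cdot m}$, so that $f_{lm}$ is precisely the extraction functional in both its cases. Everything else — verifying the denominators are positive and rewriting $Q(I-P)=I$ termwise — is routine once the Schur representation of $q_{mm}$ and $q_{lm}$ is established.
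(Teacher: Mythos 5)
Your proof is correct, and it takes a genuinely different route from the paper's. You establish the theorem directly for $B=I-P$ by identifying every quantity in the two identities as an entry of $Q=(I-P)^{-1}$: Schur-complement block inversion gives $q_{mm}=1/S_m$ and $q_{lm}=f_{lm}\bl((I-P)(m|m)\br)^{-1}p_{\cdot m}/S_m$, after which the first identity is exactly the $(m,m)$ entry and the second exactly the $(l,m)$ entry of $Q(I-P)=I$; your justification of invertibility (the spectral radius of a principal submatrix of a nonnegative matrix does not exceed that of the matrix) and of $S_m>0$ (Schur determinant identity plus the continuity argument of Theorem 1) is sound, and your reading of the misprinted $(I-P)(l|l)$ in the first identity as $(I-P)(k|k)$ is the right one, confirmed by the general Theorem 3, where the deleted index coincides with the summation index. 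The paper instead deduces the statement as a special case of Theorem 3, valid for any matrix with nonzero principal minors, and proves that theorem with adjugates and cofactor expansions: its Lemma 1, $f_{ml}\,\text{adj}(B(l|l))b_{\cdot l}=(-1)^{m+l+1}\,\text{det}\,(B(l|m))$, plays the role of your block-inversion identification of $q_{lm}$ (via Cramer's rule), its Lemma 2 is your Schur determinant identity in adjugate form, and its final steps --- cofactor expansion of $\text{det}\,B$ along a column and the vanishing of a determinant with a repeated column --- are precisely the diagonal and off-diagonal entries of $\text{adj}(B)\,B=\text{det}(B)\,I$, which is your $Q(I-P)=I$ cleared of denominators. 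So the two arguments are the same algebra in different packaging, but the packaging matters: yours is shorter and more transparent (the identities are revealed as rows of the inverse equation), and it confines all index and sign bookkeeping to the single verification that $f_{lm}$ is the correct extraction functional; the paper's argument never invokes the inverse, stays entirely within determinant manipulations, and yields the more general Theorem 3. Note that nothing essential is lost by your specialization: your argument extends verbatim to Theorem 3 if the spectral-radius justification is replaced by the hypothesis of nonzero principal minors, since then $\text{det}\,B=\text{det}\bl(B(m|m)\br)S_m\neq 0$ forces $S_m\neq 0$ and the block-inversion formula applies unchanged.
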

The assertion of the theorem is a special case of the following  result.
\begin{theorem}
\label{le:tozd}
Let 
$B=(b_{ij})$ be an  $n\times n$--matrix with nonzero principal minors.
  The following identities hold:
\begin{equation}
  \label{eq:13}
    \frac{b_{m\cdot}\bl(B(m|m)\br)^{-1}b_{\cdot m}}{b_{mm}
-b_{m\cdot}\bl(B(m|m)\br)^{-1}b_{\cdot m}}
=\sum_{l\not=m}\frac{b_{lm}f_{ml}\bl(B(l|l)\br)^{-1}b_{\cdot l}}{b_{ll}
-b_{l\cdot}\bl(B(l|l)\br)^{-1}b_{\cdot l}}
\end{equation}
and
\begin{equation}
  \label{eq:20}
  -\,  \frac{b_{mm}f_{lm}(B(m|m))^{-1}b_{\cdot m}}{b_{mm}
-b_{m\cdot}(B(m|m))^{-1}b_{\cdot m}}
=-\,\frac{b_{lm}}{b_{ll}-b_{l\cdot}(B(l|l))^{-1}b_{\cdot l}}\\
+\sum_{\substack{k\not=l,\\k\not=m}}\frac{b_{km}f_{lk}(B(k|k))^{-1}b_{\cdot
    k}}
{b_{kk}-b_{k\cdot}(B(k|k))^{-1}b_{\cdot k}}\,.
\end{equation}
\end{theorem}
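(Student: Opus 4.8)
The plan is to recognize both identities as the entries of the matrix equation $B^{-1}B=I$ in disguise. Since all principal minors of $B$ are nonzero, in particular $\det B\neq0$ and $\det B(k|k)\neq0$ for every $k$, so that $B$ and each $B(k|k)$ are invertible. Write $A=(a_{ij})=B^{-1}$, and note that $a_{kk}=\det B(k|k)/\det B\neq0$ for all $k$. The first step is to express every Schur-complement-type quantity occurring in \eqref{eq:13} and \eqref{eq:20} through the entries $a_{ij}$.

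The key lemma I would establish is that, for each index $k$,
\begin{equation*}
  \bl(B(k|k)\br)^{-1}b_{\cdot k}=-\frac{1}{a_{kk}}\,(a_{jk})_{j\neq k}
  \qquad\text{and}\qquad
  b_{kk}-b_{k\cdot}\bl(B(k|k)\br)^{-1}b_{\cdot k}=\frac{1}{a_{kk}}\,.
\end{equation*}
Both follow from $BA=I$: reading off column $k$ of $BA=I$ over the rows $i\neq k$ gives $B(k|k)(a_{jk})_{j\neq k}=-a_{kk}b_{\cdot k}$, which yields the first relation upon inverting $B(k|k)$; substituting it into $b_{k\cdot}(B(k|k))^{-1}b_{\cdot k}$ and using the row-$k$ equation $\sum_j b_{kj}a_{jk}=1$ gives the second. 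Because $f_{ik}$ is precisely the functional extracting the coordinate of a vector in $\R^{n-1}$ that sits in the slot of the original index $i$ (the shift from $e_i^T$ to $e_{i-1}^T$ compensating for the deletion of coordinate $k$), the lemma gives at once $f_{ik}(B(k|k))^{-1}b_{\cdot k}=-a_{ik}/a_{kk}$ for $i\neq k$.

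Substituting these into \eqref{eq:13}, each summand on the right collapses to $b_{lm}(-a_{ml}/a_{ll})/(1/a_{ll})=-a_{ml}b_{lm}$, while the left side becomes $(b_{mm}-1/a_{mm})/(1/a_{mm})=a_{mm}b_{mm}-1$; hence \eqref{eq:13} is equivalent to $\sum_l a_{ml}b_{lm}=1$, that is, to $(AB)_{mm}=1$. In the same way, in \eqref{eq:20} the left side reduces to $a_{lm}b_{mm}$, the isolated right-hand term to $-a_{ll}b_{lm}$, and the generic summand to $-a_{lk}b_{km}$, so that \eqref{eq:20} is equivalent to $\sum_k a_{lk}b_{km}=0$, that is, to $(AB)_{lm}=0$ for $l\neq m$. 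Since $AB=I$, both statements hold, and the theorem follows.

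I expect the only genuine obstacle to be the index bookkeeping: keeping straight the re-indexing forced by the deletion of row and column $k$, which is exactly the information encoded by the functionals $f_{ml}$, and tracking the signs so that the numerators $-a_{ik}/a_{kk}$ and the denominators $1/a_{kk}$ recombine correctly. Once the reformulation as $B^{-1}B=I$ is identified, the remaining linear algebra is routine.
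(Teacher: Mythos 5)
Your proof is correct, and it takes a genuinely different route from the paper's. The paper stays entirely at the level of determinants and adjugates: Lemma \ref{le:adj} evaluates $f_{ml}\,\text{adj}(B(l|l))b_{\cdot l}$ as the signed minor $(-1)^{m+l+1}\det(B(l|m))$ by explicit index-and-sign bookkeeping, Lemma \ref{le:det} identifies every denominator in \eqref{eq:13} and \eqref{eq:20} with $\det(B)$ via Laplace expansion, and the proof then finishes with the expansion of $\det B$ along column $m$ (for \eqref{eq:13}) and the vanishing of a determinant with a repeated column (for \eqref{eq:20}). You instead set $A=B^{-1}$ and read the Schur-complement quantities off the linear system $BA=I$: your key lemma, $(B(k|k))^{-1}b_{\cdot k}=-(a_{jk})_{j\neq k}/a_{kk}$ together with $b_{kk}-b_{k\cdot}(B(k|k))^{-1}b_{\cdot k}=1/a_{kk}$, is obtained by solving that system rather than by computing minors, and both identities then collapse to the statements $(AB)_{mm}=1$ and $(AB)_{lm}=0$ for $l\neq m$. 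The two arguments are close cousins---since $a_{ml}=(-1)^{m+l}\det(B(l|m))/\det B$ and $a_{ll}=\det(B(l|l))/\det B$, your lemma is precisely the combination of Lemmas \ref{le:adj} and \ref{le:det}, and your final reductions are the entrywise form of $\text{adj}(B)\,B=\det(B)\,I$, which is exactly what the paper's last two steps verify by hand. What your route buys: essentially no sign or reindexing computations, and a transparent explanation of \emph{why} the identities hold (they are $B^{-1}B=I$ written in Schur-complement coordinates). What the paper's route buys: explicit minor-level formulas of independent interest and an argument self-contained within determinant calculus. One hypothesis-check that your argument relies on, and handles correctly, is $a_{kk}=\det B(k|k)/\det B\neq 0$; this uses the assumption that \emph{all} principal minors, including $\det B$ itself, are nonzero, and it is what licenses the divisions in your key lemma.
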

We precede the proof with two lemmas.
Let $\text{adj}$ be used to denote the adjoint matrix and let
$M_{ij}(l|l)$ denote the  $(i,j)$--minor of the 
matrix   $B(l|l)$\,.
\begin{lemma}
  \label{le:adj}
If  $l\not=m$\,, then
\begin{equation*}
  f_{ml}\,\text{adj}(B(l|l))b_{\cdot l}=(-1)^{m+l+1}\,\text{det}\,(B(l|m))\,.
\end{equation*}
\end{lemma}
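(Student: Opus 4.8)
The plan is to read the left-hand side through Cramer's rule and then track the resulting determinant through a single column permutation. First I would note that, since $f_{ml}$ is the transpose of a standard basis vector, the scalar $f_{ml}\,\text{adj}(B(l|l))\,b_{\cdot l}$ is just one coordinate of the vector $\text{adj}(B(l|l))\,b_{\cdot l}\in\R^{n-1}$. Writing $m'=m$ when $m<l$ and $m'=m-1$ when $m>l$, we have $f_{ml}=e_{m'}^T$, so the quantity equals the $m'$th entry of that vector. The index $m'$ is precisely the position occupied, in the reduced matrix $B(l|l)$, by the row and column that carry the original index $m$; this is exactly what the two-case definition of $f_{ml}$ is designed to encode, and it is the reason the final sign comes out uniformly.

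Next I would invoke the adjugate identity $\bl(\text{adj}(A)\,v\br)_i=\det\,(A\text{ with its }i\text{th column replaced by }v)$, which is Laplace expansion along the $i$th column. Taking $A=B(l|l)$ and $v=b_{\cdot l}$, the left-hand side becomes $\det D$, where $D$ is $B(l|l)$ with its $m'$th column overwritten by $b_{\cdot l}$ (the $l$th column of $B$ with its $l$th entry removed). Reading the columns of $D$ as columns of $B$, they are exactly those indexed by $\{1,\dots,n\}\setminus\{m\}$, each restricted to the rows $\{1,\dots,n\}\setminus\{l\}$; this is the same set of columns, over the same rows, as $B(l|m)$. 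Hence $D$ and $B(l|m)$ agree up to the ordering of their columns.

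Then I would compute the sign of that reordering. In $D$ the inserted column (original column $l$) sits in position $m'$, whereas in $B(l|m)$ the retained indices appear in increasing order, placing column $l$ elsewhere. Sliding column $l$ into its proper slot past the intervening block of columns requires, by a direct count in each of the cases $m<l$ and $m>l$, exactly $\abs{l-m}-1$ adjacent transpositions. Therefore $\det D=(-1)^{\abs{l-m}-1}\det\,(B(l|m))$, and since $\abs{l-m}-1$ and $m+l+1$ have the same parity, this equals $(-1)^{m+l+1}\det\,(B(l|m))$, which is the assertion.

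The main obstacle I expect is purely combinatorial bookkeeping: one must treat $m<l$ and $m>l$ separately and keep careful track of the index shift introduced by deleting row and column $l$, i.e.\ the passage between $B$-indices and $B(l|l)$-positions. The content of the lemma is entirely in verifying that the transposition count is $\abs{l-m}-1$ in both cases; everything else is the standard adjugate/Cramer computation.
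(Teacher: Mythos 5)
Your proof is correct, and it reaches the identity by a route that is organized differently from the paper's. The paper works entrywise: it expands $e_m^T\,\text{adj}(B(l|l))\,b_{\cdot l}$ into a sum of signed minors times entries $b_{jl}$, identifies each minor of $B(l|l)$ with the corresponding minor of $B(l|m)$ (e.g.\ $M_{jm}(l|l)=M_{j,l-1}(l|m)$ for $j\le l-1$ in the case $l>m$), and then recognizes the resulting sum as the Laplace expansion of $\det(B(l|m))$ along the column inherited from column $l$ of $B$, reading the sign $(-1)^{m+l+1}$ off the cofactor signs. You instead invoke the column-replacement (Cramer/adjugate) identity to convert the scalar in one step into $\det D$, where $D$ is $B(l|l)$ with its $m'$th column overwritten by $b_{\cdot l}$, observe that $D$ and $B(l|m)$ have the same columns over the same rows, and obtain the sign by counting the $|l-m|-1$ adjacent transpositions needed to sort the columns; your parity check $|l-m|-1\equiv m+l+1 \pmod 2$ and the transposition counts in both cases $m<l$ and $m>l$ are right. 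The two arguments are computationally equivalent---the adjugate identity you cite is proved by exactly the expansion the paper writes out, and your permutation count encodes the same index bookkeeping as the paper's minor identifications---but yours trades the doubly-indexed minor manipulations for whole-determinant reasoning plus a permutation sign, which is cleaner to verify, at the cost of importing the standard identity rather than deriving everything from the definition of $\text{adj}$.
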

\begin{proof}
  Suppose that $l>m$\,. We have that
\begin{equation*}
  e_m^T\,\text{adj}\,(B(l|l))b_{\cdot l}
=\sum_{j=1}^{l-1}(-1)^{m+j}M_{jm}(l|l)b_{jl}+
\sum_{j=l+1}^{n}(-1)^{m+j-1}M_{j-1,m}(l|l)b_{jl}\,.
\end{equation*}
Since $M_{jm}(l|l)=M_{j,l-1}(l|m)$ when $j\le l-1$ and
$M_{j-1,m}(l|l)=M_{j-1,l-1}(l|m)$ when $j\ge l+1$\,, we have that
$e_m^T\,\text{adj}\,(B(l|l))b_{\cdot l}=(-1)^{m+l+1}\text{det}\,(B(l|m))\,.$

Suppose that $l<m$\,. Similarly to the above,
\begin{multline*}
e_{m-1}^T\,\text{adj}\,(B(l|l))b_{\cdot l}=
\sum_{j=1}^{l-1}(-1)^{m+j-1}M_{j,m-1}(l|l)b_{jl}+
\sum_{j=l+1}^n (-1)^{m+j}M_{j-1,m-1}(l|l)b_{jl}\\
=\sum_{j=1}^{l-1}(-1)^{m+j-1}M_{jl}(l|m)b_{jl}+
\sum_{j=l+1}^n (-1)^{m+j}M_{j-1,l}(l|m)b_{jl}=(-1)^{m+l+1}\,\text{det}\,(B(l|m))\,.
\end{multline*}
\end{proof}
\begin{lemma}
  \label{le:det}
For arbitrary $l=1,2,\ldots,n$\,,  \begin{equation*}
    b_{ll}\,\text{det}(B(l|l))-b_{l\cdot}\text{adj}(B(l|l))b_{\cdot
      l}=\text{det}\,(B)\,.  
  \end{equation*}
\end{lemma}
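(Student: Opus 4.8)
The plan is to read the identity off the Laplace (cofactor) expansion of $\det B$ along row $l$, and then use Lemma \ref{le:adj} to recognize the off-diagonal cofactors as the bilinear form $b_{l\cdot}\,\text{adj}(B(l|l))\,b_{\cdot l}$. A pleasant feature of this route is that it needs no invertibility of $B(l|l)$, so the result holds as a pure determinantal identity for every $B$ (the nonvanishing-minors hypothesis of the theorem is not invoked here).

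First I would expand along the $l$th row,
\[
\det B=\sum_{j=1}^n (-1)^{l+j}\,b_{lj}\,\det\bl(B(l|j)\br).
\]
The diagonal term $j=l$ contributes $(-1)^{2l}b_{ll}\det\bl(B(l|l)\br)=b_{ll}\det\bl(B(l|l)\br)$, which is exactly the first term on the left-hand side of the asserted identity. It then remains to match the sum over $j\neq l$ with $-\,b_{l\cdot}\,\text{adj}(B(l|l))\,b_{\cdot l}$.

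The one genuine bookkeeping step is to express the deleted row $b_{l\cdot}\in\R^{n-1}$ through the functionals $f_{ml}$. Since the columns $\neq l$ occur in $b_{l\cdot}$ in their natural order, the entry $b_{lm}$ sits in slot $m$ when $m<l$ and in slot $m-1$ when $m>l$; by the definition of $f_{ml}$ this is precisely the statement $b_{l\cdot}=\sum_{m\neq l}b_{lm}f_{ml}$. Substituting and applying Lemma \ref{le:adj} termwise gives
\[
b_{l\cdot}\,\text{adj}(B(l|l))\,b_{\cdot l}
=\sum_{m\neq l}b_{lm}\,f_{ml}\,\text{adj}(B(l|l))\,b_{\cdot l}
=\sum_{m\neq l}(-1)^{m+l+1}\,b_{lm}\,\det\bl(B(l|m)\br).
\]
Negating and using $(-1)^{m+l+1}\cdot(-1)=(-1)^{l+m}$ reproduces the cofactor signs of the expansion, so the off-diagonal part of the expansion equals $-\,b_{l\cdot}\,\text{adj}(B(l|l))\,b_{\cdot l}$; adding back the diagonal term yields the identity.

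I do not expect any serious obstacle: the substance is carried entirely by Lemma \ref{le:adj}, and the only care required is with the two conventions — the index shift at $m>l$ encoded in $f_{ml}$, and the single extra sign relating the adjugate entry $(-1)^{m+l+1}$ to the cofactor sign $(-1)^{l+m}$. For contrast, one could instead permute row and column $l$ to the front and invoke the Schur-complement formula $\det B=\det\bl(B(l|l)\br)\bl(b_{ll}-b_{l\cdot}B(l|l)^{-1}b_{\cdot l}\br)$ together with $B(l|l)^{-1}=\text{adj}(B(l|l))/\det\bl(B(l|l)\br)$; this is quicker to state but presupposes that $B(l|l)$ is invertible, whereas the cofactor argument gives the identity unconditionally.
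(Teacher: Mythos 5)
Your proof is correct and is essentially the paper's own argument: both rest on writing $b_{l\cdot}=\sum_{m\neq l}b_{lm}f_{ml}$, applying Lemma \ref{le:adj} termwise, and recognizing the resulting signed sum as the off-diagonal part of the Laplace expansion of $\det B$ along row $l$. The only difference is the direction of presentation (you start from the cofactor expansion and match terms, the paper starts from the bilinear form and arrives at the expansion), which is immaterial.
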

\begin{proof}
  Since
  \begin{equation*}
    b_{l\cdot}\,\text{adj}(B(l|l))b_{\cdot l}=
\sum_{j\not=l}b_{lj}f_{jl}\,\text{adj}\,(B(l|l))b_{\cdot l}\,,
  \end{equation*}
an application of Lemma \ref{le:adj} yields
\begin{equation*}
      b_{l\cdot}\,\text{adj}(B(l|l))b_{\cdot l}
=\sum_{j\not=l}b_{lj}(-1)^{j+l+1}\,\text{det}\,(B(l|j))=
-\text{det}\,(B)+b_{ll}\,\text{det}\,(B(l|l))\,.
\end{equation*} \end{proof}
\begin{proof}[Proof of Theorem \ref{le:tozd}]
  Equation \eqref{eq:13} holds if and only if
\begin{equation}
  \label{eq:17}
      \frac{b_{m\cdot}\,\text{adj}\,(B(m|m))b_{\cdot
          m}}{b_{mm}\,\text{det}\,(B(m|m)) 
-b_{m\cdot}\,\text{adj}\,(B(m|m))b_{\cdot m}}
=\sum_{l\not=m}\frac{b_{lm}f_{ml}\,\text{adj}\,(B(l|l))
b_{\cdot l}}{b_{ll}\,\text{det}\,(B(l|l))
-b_{l\cdot}\,\text{adj}\,(B(l|l))b_{\cdot l}}\,.
\end{equation}
By Lemma \ref{le:det}, the denominators in 
 \eqref{eq:17} equal $\text{det}(B)$\,.
One thus needs to prove that 
\begin{equation*}
  b_{m\cdot}\,\text{adj}\,(B(m|m))b_{\cdot
          m}
=\sum_{l\not=m}b_{lm}f_{ml}\,\text{adj}\,(B(l|l))
b_{\cdot l}\,.
\end{equation*}
By Lemma \ref{le:adj}, 
\begin{equation*}
  \sum_{l\not=m}b_{lm}f_{ml}\,\text{adj}\,(B(l|l))
b_{\cdot l}=\sum_{l\not=m}b_{lm}(-1)^{m+l+1}\,\text{det}(B(l|m))=
-\text{det}(B)+b_{mm}\,\text{det}(B(m|m))\,,
\end{equation*}
which concludes the proof of \eqref{eq:13} by Lemma  \ref{le:det}.

Multiplying the numerators and denominators in 
 \eqref{eq:20} with the determinants of the matrices being inverted
and applying Lemma \ref{le:det} obtain that 
 \eqref{eq:20} is equivalent to the equation
\begin{equation}
  \label{eq:21}
  -b_{mm}f_{lm}\,\text{adj}\,(B(m|m))b_{\cdot m}
=-b_{lm}\,\text{det}\,(B(l|l))
+\sum_{\substack{k\not=l,\\k\not=m}}b_{km}f_{lk}\,\text{adj}\,(B(k|k))b_{\cdot
    k}\,.
\end{equation}
By Lemma \ref{le:adj},
\begin{equation*}
  \sum_{k\not=l}b_{km}f_{lk}\,\text{adj}\,(B(k|k))b_{\cdot
    k}=\sum_{k\not=l}b_{km}
(-1)^{k+l+1}\,\text{det}\,(B(k|l))\,.
\end{equation*}
Since $\sum_{k=1}^nb_{km}
(-1)^{k+l}\,\text{det}\,(B(k|l))=0$ because the lefthand side 
is the determinant of the matrix that is obtained from the matrix $B$ 
by replacing the 
$l$th column with the   $m$th column, 
\begin{equation*}
  \sum_{k\not=l}b_{km}f_{lk}\textit{adj}(B(k|k))b_{\cdot
    k}=b_{lm}\textit{det}(B(l|l))\,.
\end{equation*}
Therefore,
\begin{equation*}
  \sum_{\substack{k\not=l,\\k\not=m}}b_{km}f_{lk}\,\text{adj}\,(B(k|k))b_{\cdot
    k}=b_{lm}\,\text{det}\,(B(l|l))-b_{mm}f_{lm}\,\text{adj}\,(B(m|m))b_{\cdot
    m}\,.
\end{equation*}
Equation \eqref{eq:21} is proved.

\end{proof}

\end{document}